\newtheorem{theorem}{Theorem}[section]
\newtheorem{corollary}[theorem]{Corollary}
\theoremstyle{definition}
\numberwithin{equation}{section}
\newcommand{\ent}{\operatorname{Ent}}
\begin{document}

\title[Stability of the log-Sobolev inequality for the Gaussian measure]{Remark on the stability of the log-Sobolev inequality for the Gaussian measure}

%%%%%%%%%%%%%%  Autors %%%%%%%%%%%%%%%%%%
\author[F. Feo]{Filomena Feo}
\address{Dipartimento di Ingegneria, Universit\`a degli Studi di Napoli ``Parthenope"\\
Centro Direzionale Isola C4\\
 80100 Naples, Italy.}
\email{filomena.feo@uniparthenope.it}

%%%%%%%%%%%%%%
\author[M.R. Posteraro]{Maria Rosaria Posteraro}
\address{Dipartimento di Matematica e Applicazioni ``Renato Caccioppoli"\\
 Via Cintia - Complesso Monte  S. Angelo\\
  80100 Naples, Italy.}
\email{posterar@unina.it}

%%%%%%%%%%%%%
\author[C. Roberto]{Cyril Roberto}
\address{Universit\'e Paris Ouest Nanterre la D\'efense\\
 MODAL'X EA 3454, 200 avenue de la R\'epublique\\
 92000 Nanterre, France.}
\email{croberto@math.cnrs.fr}

\date{\today}

\thanks{Supported by the grants ANR 2011 BS01 007 01,  ANR 10 LABX-58, ANR11-LBX-0023-01  and by  the Gruppo Nazionale per l'Analisi Matematica, la Probabilit\' a e le loro Applicazioni (GNAMPA) of the Istituto Nazionale di Alta Matematica (INdAM)}

\keywords{Log-Sobolev inequality, transport inequality, Prekopa-Leindler, stability, Gaussian measure}

%\subjclass{ }

\begin{abstract}
In this  note we bound the deficit in the logarithmic Sobolev Inequality  and in the Talagrand transport-entropy Inequality for the Gaussian measure, in any dimension, by mean of a distance introduced by Bucur and Fragal\`a.
\end{abstract}

\maketitle

%%%%%%%%%%%%%%%%%%%%%%%%%%%%%%%%%%%%%%%%%%%%%%%%%%%%%%%%%%%%%%%%%%%%%%%%%%%%%%%%%%%%%%%%%%%%%%%%%%%%
%%%%%%%%%%%%%%%%%%%%%%%%%%%%%%%%%%%%%%%%%%%%%%%%%%%%%%%%%%%%%%%%%%%%%%%%%%%%%%%%%%%%%%%%%%%%%%%%%%%%
%%%%%%%%%%%%%%%%%%%%%%%%%%%%%%%%%%%%%%%%%%%%%%%%%%%%%%%%%%%%%%%%%%%%%%%%%%%%%%%%%%%%%%%%%%%%%%%%%%%%
%%%%%%%%%%%%%%%%%%%%%%%%%%%%%%%%%%%%%%%%%%%%%%%%%%%%%%%%%%%%%%%%%%%%%%%%%%%%%%%%%%%%%%%%%%%%%%%%%%%%

\section{introduction}
The log-Sobolev inequality asserts that, in any dimension $n$ and for any smooth enough function
$f \colon  \mathbb{R}^n \to \mathbb{R}_+^*:=(0,+\infty)$, it holds
\begin{equation} \label{losob}
\ent_{\gamma_n} (f) \leq \frac{1}{2} \int_{\mathbb{R}^n} \frac{|\nabla f|^2}{f} \, d\gamma_n,
\end{equation}
where $\gamma_n(dx)=\varphi_n(x)dx:=(2 \pi)^{-\frac{n}{2}} \exp\{-\frac{|x|^2}{2}\}dx$, $x \in \mathbb{R}^n$, is the standard Gaussian measure with density $\varphi_n$, $|x|=\sqrt{\sum_{i=1}^n x_i^2}$ stands for the Euclidean norm of $x=(x_1,\dots,x_n)$ (accordingly $|\nabla f|$ is the Euclidean length of the gradient) and $\ent_{\gamma_n}(f):=\int_{\mathbb{R}^n} f \log f \,  d\gamma_n - \int_{\mathbb{R}^n} f \, d\gamma_n \log \int_{\mathbb{R}^n} f \, d\gamma_n$ is the entropy of $f$ with respect to $\gamma_n$. The constant $1/2$ is optimal. Moreover, equality holds in \eqref{losob} if and only if $f$ is the exponential of a linear function, \textit{i.e.}\ there exist $a \in \mathbb{R}^n$, $b \in \mathbb{R}$ such that $f(x)=\exp\{a \cdot x + b\}$, $x \in \mathbb{R}^n$. For simplicity we may write $\varphi$ and $\gamma$ for $\varphi_1$ and $\gamma_1$.

The log-Sobolev inequality above goes back to Stam \cite{stam} in the late fifties. Later Gross, in his seminal paper \cite{gross-75}, rediscovered the inequality and proved its fundamental equivalence with the so-called hypercontractivity property, a notion used by Nelson \cite{nelson1} in quantum filed theory.
Since then the log-Sobolev inequality attracted a lot of attention with many developments, applications and connections with other fields, including Geometry, Analysis, Combinatorics, Probability Theory and Statistical Mechanics. We refer to the monographs \cite{ane,bakry,gross-93,villani,ledoux-99,martinelli} for an introduction.
Finally, we mention that equality cases, in \eqref{losob}, appear in the paper by Carlen \cite{carlen}.

Very recently there has been some interest in the study of the stability of the log-Sobolev inequality \eqref{losob}. Namely the question is: can one bound the difference between the right and left hand side of \eqref{losob} in term of the distance (in a sense to be defined) between $f$ and the set of optimal functions?
In other words, can one bound from below the \emph{deficit}
\begin{equation} \label{deficit}
\delta_{LS}(f):= \frac{1}{2} \int_{\mathbb{R}^n} \frac{|\nabla f|^2}{f} \, d\gamma_n - \ent_{\gamma_n}(f)
\end{equation}
in some reasonable way?
We refer to \cite{indrei,bobkov,fathi} for various results in this direction.
What is however currently lacking in the aforementioned literature is a result stating that, in fact,
$\delta_{LS}(f) \geq d(f, \mathcal{O})$ where $\mathcal{O}:=\{e^{a \cdot x + b}, a \in \mathbb{R}^n, b \in \mathbb{R} \}$ is the set of functions achieving equality in \eqref{losob} and where $d$ is some distance.

Our aim is to give a result in this direction, using a distance introduced by Bucur and Fragal\`a in \cite{bucur} that we recall now.

%Lower bounds for the deficit in the Poincar\'e  and in the  so-called Talagrand inequalities   are  also given.

\subsubsection*{Bucur and Fragal\`a's construction of a distance modulo translation}

In this   section we recall the procedure of Bucur and Fragal\`a \cite{bucur} to define a distance (modulo translation) in dimension $n$ starting with a distance (modulo translation) in dimension 1. We first give the definition of a \emph{distance modulo translation}.

Let $\mathcal{S}_n$ be some set of non-negative functions defined on $\mathbb{R}^n$.
A mapping $m : \mathcal{S}_n \times \mathcal{S}_n \to \mathbb{R}_+$ is said to be a distance modulo translation (on $\mathcal{S}_n$) if $(i)$ $m$ is symmetric, $(ii)$ it satisfies the triangular inequality and $(iii)$ $m(u,v)=0$ iff there exists $a \in \mathbb{R}^n$ such that $v(x)=u(x+a)$ for all $x \in \mathbb{R}^n$.

Now, given a direction $\xi \in \mathbb{S}^{n-1}$ (the unit sphere of $\mathbb{R}^n$), let $x=(x',t\xi)$ be the decomposition of any point $x \in \mathbb{R}^n$ in the direct sum of the linear span of $\xi$ and its orthogonal hyperplane $H_\xi:=\{y \in \mathbb{R}^n : <\xi , y> =0\}$ (here $< \cdot,\cdot>$ stands for the Euclidean scalar product).
Then, for all integrable function $f \colon \mathbb{R}^n \to \mathbb{R}$, define
$f_\xi \colon \mathbb{R} \to \mathbb{R}, t \mapsto f_\xi(t):=\int_{H_\xi} f(x',t\xi)d\mathcal{H}^{n-1}(x')$,
where $\mathcal{H}^{n-1}$ is the $(n-1)$-dimensional Hausdorff measure on $H_\xi$.
Given a distance $m$ modulo translation on some set $\mathcal{S}$ of non-negative real functions, set
%\color{magenta}
$$
\mathcal{S}_n:=\{
% f \colon \mathbb{R}^n \to \mathbb{R}_+ :
f\in L^{1}(\mathbb{R}^n,\mathbb{R}_+), \> xf(x)\in L^{1}(\mathbb{R}^n,\mathbb{R}^n), \> f_\xi \in \mathcal{S} \mbox{ for all } \xi \in \mathbb{S}^{n-1}\}
$$
\color{black}
and, for $f, g \in \mathcal{S}_n$,
$$
m_n(f,g):= \sup_{\xi \in \mathbb{S}^{n-1}} m(f_\xi ,g_\xi) .
$$
In \cite[Corollary 2.3]{bucur}, it is proved that $m_n$ is a distance modulo translation on $\mathcal{S}_n$.

Also, Bucur and Fragal\`a \cite{bucur} introduce the following distance modulo translation that we may use in the next sections. Set
\begin{align*}
\mathcal{B}:= &\left\{ u \colon \mathbb{R} \to \mathbb{R}_+^* : \mbox{continuous and }
\int_{\mathbb{R}} u(x) dx=1 \right\} .
\end{align*}
Given two probability measures $\mu(dx)=u(x)dx$, and $\nu(dx)=v(x)dx$, $u,v \in \mathcal{B}$, set
$T=F_\nu^{-1} \circ F_\mu$, where $F_\mu(x):=\int_{-\infty}^x u(y)dy$
and $F_\nu(x):=\int_{-\infty}^x v(y)dy$ are the distribution functions of $\mu$ and $\nu$ respectively (observe that, since $u \in \mathcal{B}$, $F_\mu^{-1}$ is well defined and so does $T'$ (note that $T$ is increasing)).
$T$ is the transport map that pushes forward $\mu$ onto $\nu$, \textit{i.e.} the mapping satisfying
$\int_{\mathbb{R}} h(T)\, d\mu=\int_{\mathbb{R}} h\, d\nu$ for all bounded continuous function $h$. The following is a distance modulo translation on $\mathcal{B}$ (see \cite[Proposition 3.5]{bucur})
\begin{equation}\label {d}
d(u,v):= \int \frac{|1-T'|}{\max(1,T')} d\mu .
\end{equation}
We denote by $d_n$ and $\mathcal{B}_n$ the distance modulo translation and the set of functions constructed by the above procedure, starting from $d$ and $\mathcal{B}$ in dimension 1.

%Such a procedure applies to $\mathcal{S}=\mathcal{B}$ and $m=d$.

\section{Stability of the Log-Sobolev inequality}

 In order to state our main result, we need first to give a precise statement for
\eqref{losob} to hold.

It is well-known that \eqref{losob} holds for any $f$ such that  $\int_{\mathbb{R}^n} | f |  d\gamma_n +   \int_{\mathbb{R}^n}  \frac{|\nabla f|^2}  {f}  d\gamma_n < \infty ,  $  {\it i.e.}  $|f|^{1/2} \in H^1(\gamma_n)$, see  \textit{e.g.}\ \cite[Chapter 1]{bogachev}. By a density argument one can restrict \eqref{losob}, without loss, to all $f$ positive
(since $|\nabla |f||=|\nabla f|$ almost everywhere), and by homogeneity, we can assume furthermore that $\int_{\mathbb{R}^n} f \,  d\gamma_n = 1$.
We call $\mathcal{A}_n$ the set of $\mathcal{C}^1$ functions $f \colon \mathbb{R}^n \to \mathbb{R}_+^*$ such that $\int_{\mathbb{R}^n} f \,  d\gamma_n=1$ and  $\int_{\mathbb{R}^n} \frac{|\nabla f|^2}  {f}  d\gamma_n < \infty$.
It is dense in the set of all functions satisfying the log-Sobolev Inequality \eqref{losob} and is contained in $\mathcal{B}_n$. We observe that the set of extremal functions (with the proper normalization) $\{\exp\{a \cdot x - \frac{|a|^2}{2}\}, a \in \mathbb{R}^n \}$ is contained in $\mathcal{A}_n$.

We are now in position to state our main theorem (recall the definition of $d_n$ from the previous section).

\begin{theorem} \label{thmain}
For all $n$ and all  $f \in \mathcal{A}_n$ it holds
$$
\delta_{LS}(f) \geq \frac{1}{2} d_n \left( f \varphi_n, \varphi_n \right)^2 .
$$
\end{theorem}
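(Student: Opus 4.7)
The plan decomposes into three steps: reduction to dimension one via rotation invariance and tensorization, a sharp identity for the $1$-dimensional log-Sobolev deficit in terms of the Brenier transport map, and an elementary pointwise inequality coupled with Cauchy-Schwarz to extract the Bucur-Fragalà distance.

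\textbf{Step 1 (reduction to dimension 1).} By rotation invariance of $\gamma_n$ and of $\delta_{LS}$, fix $\xi \in \mathbb{S}^{n-1}$ and assume without loss of generality that $\xi = e_n$. Writing $x = (x',t)$ and setting $\tilde f(t) := \int_{\mathbb{R}^{n-1}} f(x',t)\,\gamma_{n-1}(dx')$, one checks that $\tilde f$ is a probability density with respect to $\gamma$ and that $(f\varphi_n)_{e_n} = \tilde f\,\varphi$, $(\varphi_n)_{e_n} = \varphi$. The tensorization of entropy
$$\ent_{\gamma_n}(f) = \ent_\gamma(\tilde f) + \int \tilde f(t)\, \ent_{\gamma_{n-1}}\!\bigl(f(\cdot,t)/\tilde f(t)\bigr)\,\gamma(dt),$$
combined with the log-Sobolev inequality applied to each conditional density and with the Cauchy-Schwarz bound $(\tilde f')^2/\tilde f \leq \int (\partial_t f)^2/f\,\gamma_{n-1}(dx')$, produces $\delta_{LS}^{(n)}(f) \geq \delta_{LS}^{(1)}(\tilde f)$. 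Taking the supremum over $\xi$ at the very end, it suffices to prove the 1D statement $\delta_{LS}^{(1)}(\tilde f) \geq \tfrac{1}{2}d(\tilde f\varphi,\varphi)^2$.

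\textbf{Step 2 (Cordero-style identity in 1D).} Let $T$ be the monotone transport pushing $\mu := \tilde f\gamma$ onto $\gamma$. From the mass-conservation relation $\tilde f(x) = \varphi(T(x))T'(x)/\varphi(x)$ one gets $\log\tilde f = (x^2 - T^2)/2 + \log T'$, hence
$$\ent_\gamma(\tilde f) = \tfrac{1}{2}(E_\mu[x^2] - 1) + \int \log T'\,d\mu,$$
using $\int T^2 d\mu = \int y^2 d\gamma = 1$. Decompose
$$\int \log T'\,d\mu = \int T'\,d\mu - 1 - D_1, \qquad D_1 := \int(T' - 1 - \log T')\,d\mu \geq 0.$$
Integration by parts (with $\varphi'=-x\varphi$) yields $\int T'\,d\mu = \int T\,(x - (\log\tilde f)')\,d\mu$, and Cauchy-Schwarz with $\int T^2 d\mu = 1$ gives $\int T'\,d\mu = \sqrt{c} - D_2$ for some $D_2\geq 0$, where a second integration by parts shows $c := \int(x - (\log\tilde f)')^2 d\mu = I(\tilde f) + 2 - E_\mu[x^2]$. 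Substituting and simplifying,
$$\delta_{LS}^{(1)}(\tilde f) = \tfrac{1}{2}(\sqrt{c} - 1)^2 + D_1 + D_2 \;\geq\; D_1.$$
This exhibits $\delta_{LS}^{(1)}$ as a sum of three non-negative terms, of which $D_1$ is the one depending only on $T'$.

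\textbf{Step 3 (pointwise bound and Cauchy-Schwarz).} Taylor expansion of $g(t) := t - 1 - \log t$ around $t=1$, using $g''(t) = 1/t^2$, gives the pointwise inequality
$$t - 1 - \log t \;\geq\; \frac{(t-1)^2}{2\max(t,1)^2},\qquad t>0.$$
Applying this to $T'(x)$ and integrating, then using Cauchy-Schwarz on the probability measure $\mu$,
$$D_1 \;\geq\; \tfrac{1}{2}\int\left(\frac{|T'-1|}{\max(T',1)}\right)^{\!2} d\mu \;\geq\; \tfrac{1}{2}\left(\int \frac{|T'-1|}{\max(T',1)}\,d\mu\right)^{\!2} = \tfrac{1}{2}\,d(\tilde f\varphi,\varphi)^2.$$
Chaining this with Step 2 and the reduction of Step 1, and taking $\sup_\xi$, yields the theorem.

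The main technical obstacle is Step 2: justifying that all boundary terms in the integrations by parts vanish (which should follow from the regularity imposed by $\mathcal{A}_n$ and the integrability $xf(x)\in L^1$ built into $\mathcal{B}_n$), and correctly identifying the three non-negative contributions $\tfrac{1}{2}(\sqrt{c}-1)^2$, $D_1$, $D_2$ in the deficit. Steps 1 and 3 are then routine, the latter resting only on the convex function $t \mapsto t-1-\log t$ and a single application of Cauchy-Schwarz.
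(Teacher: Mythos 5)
Your proof is correct, but it takes a genuinely different route from the paper's. The paper obtains the theorem from the Bobkov--Ledoux derivation of the log-Sobolev inequality via the Pr\'ekopa--Leindler inequality, replacing the classical Pr\'ekopa--Leindler statement by Bucur and Fragal\`a's quantitative version (deficit at least $\frac12\lambda^{1+\lambda}(1-\lambda)^{2-\lambda}d_n(u,v)^2$) and then letting $\lambda\to0$; there the distance $d_n$ is handed over directly by the quantitative Pr\'ekopa--Leindler inequality, and the delicate point is the passage to the limit $\liminf_{\lambda\to0}d_n(u_\lambda,v)\geq d_n(e^g\varphi_n,\varphi_n)$. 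You instead first reduce to dimension one through the marginal $\tilde f$ in the direction $\xi$ (your Step 1 inequality $\delta_{LS}(f)\geq\delta_{LS}(\tilde f)$ is a correct consequence of the chain rule for entropy, the conditional log-Sobolev inequality and Cauchy--Schwarz), and then run the one-dimensional mass-transport proof of the log-Sobolev inequality, arriving at the exact decomposition $\delta_{LS}(\tilde f)=\frac12(\sqrt c-1)^2+D_1+D_2$ and discarding all but $D_1=\int(T'-1-\log T')\,d\mu$; your Step 3 is then precisely the mechanism the paper itself uses in Section 3 for the Talagrand deficit, namely the pointwise bound $s-1-\log s\geq\frac12\bigl(\frac{1-s}{\max(1,s)}\bigr)^2$ followed by Cauchy--Schwarz. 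In effect you prove the log-Sobolev stability by the method the authors reserve for Theorem \ref{PT}. What your route buys: it bypasses the quantitative Pr\'ekopa--Leindler inequality entirely (whose constant, the authors concede, is not explicit in \cite{bucur}), it avoids the limiting argument in $\lambda$, and it produces a stronger one-dimensional statement with the additional non-negative remainders $\frac12(\sqrt c-1)^2$ and $D_2$. What it costs: the integrations by parts need vanishing boundary terms, and the quantity $E_\mu[x^2]$ must be finite --- this is not assumed in $\mathcal{A}_n$, and since the authors explicitly advertise the absence of a second-moment hypothesis you should note that finiteness follows automatically from $\int_{\mathbb{R}^n}|\nabla f|^2/f\,d\gamma_n<\infty$ via the log-Sobolev and Talagrand inequalities. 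Both proofs share the same limitation that the resulting bound is one-dimensional and does not capture the product structure of the inequality.
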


Before moving to the proof of Theorem \ref{thmain} which is very short and elementary, let us comment on the above result.

First, from the above result, we (partially) recover the cases of equality in the log-Sobolev inequality for the Gaussian measure \cite{carlen}. Indeed, $f \in \mathcal{A}_n$ achieves the equality in the log-Sobolev inequality iff
$\delta_{LS}(f)=0$ iff $f \varphi_n$ is a translation of $\varphi_n$ iff $f(x)=\exp\{-a \cdot x - \frac{|a|^2}{2}\}$ for some $a \in \mathbb{R}^n$. This is only partial since Theorem \ref{thmain} do not deal with all functions  satisfying the log-Sobolev inequality but only with $f \in \mathcal{A}_n$.
There is in fact some technical issues here: the distance $d$ is no more a distance modulo translation if $F_\mu^{-1}$ ($\mu$ is the one dimensional probability measure with density $f$) is not absolutely continuous (a property that is guaranteed by the fact that, in the definition of $\mathcal{A}_n$, we impose the positivity of the functions), see \cite[Remark 3.6 (i)]{bucur}. Hence, a result involving the distance $d_n$ cannot recover, by essence, the full generality of Carlen's equality cases \cite{carlen}. However, $\mathcal{A}_n$ is very close to cover the set of all functions satisfying the log-Sobolev inequality (in particular it is dense in such a space) and, to the best of our knowledge, there is no result in the current literature that gives a lower bound of the deficit involving a distance without any second moment condition.

The assumption $f$ of class $\mathcal{C}^1$, in the definition of $\mathcal{A}_n$ can certainly be relaxed.
Indeed, one only needs, in dimension 1, that $F_\mu^{-1}$ is an absolutely continuous function \cite[Proposition 3.5]{bucur} (for $d\mu(x)=f(x) \varphi(x) dx$, $x \in \mathbb{R}$). In dimension $n$, such a property should hold for all directions $\xi \in \mathbb{S}^{n-1}$. For this reason, and as mentioned above, there is no hope to obtain the whole family of functions satisfying the log-Sobolev Inequality. Hence, we opted for an easy and clean presentation rather than for a more technical one (a weaker assumption on $f$ would have led us to technical approximations in many places, that, to our opinion, play no essential role).

We also observe that our result does not capture the product character of the log-Sobolev inequality. Indeed, if one considers,  on $\mathbb{R}^n$, a function of the form $f(x)=h(x_1)h(x_2)\dots h(x_n)$, $x=(x_1,\dots,x_n)$, with $h \colon \mathbb{R} \to \mathbb{R}_+^*$, then it is not difficult to see that  $\delta_{LS}(f)$ is of order $n$
%\color{magenta}
(\textit{i.e.} $\delta_{LS}(f)=n\delta_{LS}(h)$)
%\color{black}
thanks to the tensorisation property of  \eqref{losob} (see \textit{e.g.}\ \cite[Chapter 1]{ane}), while
$d_n \left(f \varphi_n, \varphi_n \right)$ is of order 1
%\color{magenta}
(\textit{i.e.} $d_n \left(f \varphi_n, \varphi_n \right)=d \left(h \varphi, \varphi \right)$).
% \color{black}
 This mainly comes from our use of Bucur and Fragal\`a's quantitative Prekopa-Leindler Inequality which is also, by construction, 1 dimensional. See below for some results based on the tensorisation property of the log-Sobolev inequality.

\begin{proof}[Proof of Theorem \ref{thmain}]
The proof is based on the approach of Bobkov and Ledoux \cite{bobkov-ledoux00} to the log-Sobolev inequality by mean of the Pr\'ekopa-Leindler Inequality, together with an improved version of the Pr\'ekopa-Leindler Inequality of Bucur and Fragal\`a \cite{bucur}. Our starting point is the following result (see \cite[Proposition 3.5]{bucur}): given a triple
$u, v , w \colon \mathbb{R}^n \to \mathbb{R}_+$ with $u,v \in \mathcal{B}_n$ and $\lambda \in [0,1]$ that satisfy $w(\lambda x + (1-\lambda)y) \geq u(x)^\lambda v(y)^{1-\lambda}$ for all $x,y \in \mathbb{R}^n$, it holds
\begin{equation} \label{PL}
\int_{\mathbb{R}^n} w(x) dx - 1 \geq {1\over 2}\,\lambda^{1+\lambda}(1-\lambda)^{2-\lambda}  d_n(u,v)^2 .
\end{equation}
We stress that the constant $\lambda $ in the right hand side of the latter is not given explicitly in \cite{bucur},
but the reader can easily recover such a bound following carefully the proof of \cite[Proposition 3.5]{bucur}.
(Inequality \eqref{PL} goes back to the seventies \cite{leindler,prekopa} and has numerous applications in convex geometry and functional analysis. We refer to the monographs \cite{barthe,gardner,villani} for an introduction.
We further mention that equality cases are given in \cite{dubuc}, and refer to Ball and B\"or\"oczky \cite{ball1,ball2}
for related results on the stability of the Pr\'ekopa-Leindler Inequality.)

Our aim is to apply \eqref{PL} to a proper choice of triple $u,v,w$. Following \cite{bobkov-ledoux00}, let
$f=e^g$ with $g$ sufficiently smooth
%\color{magenta}
%(say with compact support with $\int_{\mathbb{R}^n} f \,  d\gamma_n = 1$),
with compact support and $\int_{\mathbb{R}^n} f \,  d\gamma_n = 1$,
%\color{black}
$\lambda \in (0,1)$, and set
$$
u_\lambda(x)=\frac{e^{\frac{g(x)}{1-\lambda}}\varphi_n(x)}{\int_{\mathbb{R}^n} e^{\frac{g}{1-\lambda}} \,  d\gamma_n},
\quad v(y)=\varphi_n(y)
\quad \mbox{and }
w_\lambda(z)=e^{g_\lambda(z)} \varphi_n(z)
$$
with
$$
g_\lambda(z) : = \sup_{\genfrac{}{}{0pt}{}{x,y :}{(1-\lambda)x+\lambda y=z}} \left( g(x) - \frac{\lambda(1-\lambda)}{2} |x-y|^2\right)
- (1-\lambda) \log \int_{\mathbb{R}^n} e^{\frac{g}{1-\lambda}} \,  d\gamma_n.
$$
The function $g_\lambda$ is the optimal function such that it holds
$w_\lambda((1-\lambda)x+\lambda y) \geq u_\lambda(x)^{1-\lambda}v(y)^\lambda$. Set $h_\lambda(z):=\sup_{\genfrac{}{}{0pt}{}{x,y :}{(1-\lambda)x+\lambda y=z}} \left( g(x) - \frac{\lambda(1-\lambda)}{2} |x-y|^2\right) $.
Then, by \eqref{PL} above, we get
$$
\int_{\mathbb{R}^n} e^{h_\lambda} \, d\gamma_n \geq \left(\int_{\mathbb{R}^n} e^{\frac{g}{1-\lambda}}\right)^{1-\lambda}
\left(1+{1\over 2}\,\lambda^{1+\lambda}(1-\lambda)^{2-\lambda}  d_n(u_\lambda,v)^2\right)
$$
The aim is to take the limit $\lambda \to 0$. We observe that (see \cite{bobkov-ledoux00} for details), as $\lambda$ tends to zero
$$
\left(\int_{\mathbb{R}^n} e^{\frac{g}{1-\lambda}}\right)^{1-\lambda} = \int_{\mathbb{R}^n} e^g  d\gamma_n + \lambda \ent_{\gamma_n}(e^g) + o(\lambda)
$$
and
$$
\int_{\mathbb{R}^n} e^{h_\lambda} \, d\gamma_n = \int_{\mathbb{R}^n} e^g  d\gamma_n + \frac{\lambda}{2(1-\lambda)} \int_{\mathbb{R}^n} |\nabla g|^2 e^g \, d\gamma_n + o(\lambda) .
$$
Therefore, dividing by $\lambda$, and taking the limit, we end up with
$$
\liminf_{\lambda \to 0} {1\over 2}\lambda^\lambda(1-\lambda)^{2-\lambda} d_n(u_\lambda,v)^2 + \ent_{\gamma_n}(e^g) \leq
\frac{1}{2} \int_{\mathbb{R}^n} |\nabla g|^2 e^g   d\gamma_n .
$$
We are left with the study of $\liminf_{\lambda \to 0} d_n(u_\lambda,v)^2$,
since $\lim_{\lambda \to 0} \lambda^\lambda(1-\lambda)^{2-\lambda} = 1$.
For simplicity set $u:=u_0$ (\textit{i.e.}\ $u$ is the function $u_\lambda$ defined above with $\lambda=0$).
By the Lebesgue Theorem  we observe that, for any direction $\xi \in \mathbb{S}^{n-1}$,
$\lim_{\lambda \to 0} (u_\lambda)_\xi = u_\xi$. Hence, using again the Lebesgue Theorem
(observe that, in the definition of $d$, $|1-T'|/\max(1,T') \leq 1$)
\begin{align*}
\liminf_{\lambda \to 0} d_n(u_\lambda,v)
& \geq
\sup_{\xi \in \mathbb{S}^{n-1}} \liminf_{\lambda \to 0}
d((u_\lambda)_\xi,v_\xi)
=
\sup_{\xi \in \mathbb{S}^{n-1}}  d_n(u_\xi,(\varphi_n)_\xi)
=
d_n(e^g \varphi_n,\varphi_n) .
\end{align*}
The expected result follows for $g$ sufficiently smooth. The result for a general $f \in \mathcal{A}_n$ follows by an easy approximation argument (using again the monotone convergence Theorem and the Lebesgue Theorem), details are left to the reader.
\end{proof}

Next we derive from Theorem \ref{thmain} a lower bound on the log-Sobolev inequality, in dimension $n$, that involves
 $n$ times the one dimensional distance $d$. Such a result will capture on one hand the product structure of the inequality, but on the other hand the  deficit will no more be bounded by a distance (modulo translation).

We need some notation. Given $x=(x_1,\dots,x_n) \in \mathbb{R}^n$, $i \in \{1,\dots,n\}$ and $y_i \in \mathbb{R}$, set $\bar{x}^i:=(x_1,\dots,x_{i-1},x_{i+1},\dots,x_n)$ and $\bar{x}^i y_i:=(x_1,\dots,x_{i-1},y_i,x_{i+1},\dots,x_n)$ (so that $\bar{x}^i x_i = x$). Then,
for all functions $f \colon \mathbb{R}^n \to \mathbb{R}$ and all $x \in \mathbb{R}^n$, we denote by $f_{\bar{x}^i} \colon \mathbb{R} \to \mathbb{R}$ the one dimensional function defined by $f_{\bar{x}^i}(y_i):=f(\bar{x}^iy_i)$,
$y_i \in \mathbb{R}$ (obviously $f_{\bar{x}^i}(x_i)=f(x)$).

We may prove the following result.

\begin{corollary}
For all $n$ and all  $f \in \mathcal{A}_n$ it holds
$$
\delta_{LS}(f) \geq \frac{1}{2} \sum_{i=1}^n \int_{\mathbb{R}^{n-1}} d \left( f_{\bar{x}^i} \varphi, \varphi \right)^2 d\gamma_{n-1}(\bar{x}^i) .
$$
\end{corollary}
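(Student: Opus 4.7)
The plan is to combine the tensorization property of the log-Sobolev deficit with the one-dimensional case $n=1$ of Theorem \ref{thmain}. The right-hand side of the corollary is a sum of one-dimensional distances over slices, which is exactly what one obtains by slicing $\delta_{LS}(f)$ in each coordinate and applying Theorem \ref{thmain} slice by slice.

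First, I would split the deficit into coordinate contributions. By Fubini and $|\nabla f|^2 = \sum_{i=1}^n (\partial_i f)^2$, the Dirichlet form tensorizes exactly:
$$
\frac{1}{2}\int_{\mathbb{R}^n} \frac{|\nabla f|^2}{f}\, d\gamma_n
= \sum_{i=1}^n \int_{\mathbb{R}^{n-1}} \left(\frac{1}{2}\int_{\mathbb{R}} \frac{(f_{\bar x^i}')^2}{f_{\bar x^i}}\, d\gamma \right) d\gamma_{n-1}(\bar x^i).
$$
On the entropy side, the classical subadditivity of entropy for product measures (see \textit{e.g.}\ \cite[Chapter 1]{ane}) gives
$$
\ent_{\gamma_n}(f) \leq \sum_{i=1}^n \int_{\mathbb{R}^{n-1}} \ent_\gamma(f_{\bar x^i})\, d\gamma_{n-1}(\bar x^i).
$$
Subtracting the two yields a pointwise (in $\bar x^i$) reduction of the $n$-dimensional deficit to a sum of one-dimensional deficits:
$$
\delta_{LS}(f) \geq \sum_{i=1}^n \int_{\mathbb{R}^{n-1}} \delta_{LS}(f_{\bar x^i})\, d\gamma_{n-1}(\bar x^i),
$$
where on the right-hand side $\delta_{LS}(f_{\bar x^i})$ denotes the one-dimensional log-Sobolev deficit of the slice $y \mapsto f(\bar x^i y)$.

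Next, I would apply the $n=1$ case of Theorem \ref{thmain} to each slice. Since $\int_{\mathbb{R}^n} f\, d\gamma_n = 1$ does not force $c(\bar x^i) := \int_{\mathbb{R}} f_{\bar x^i}\, d\gamma$ to equal $1$, I would first renormalize to $f_{\bar x^i}/c(\bar x^i) \in \mathcal{A}_1$, apply Theorem \ref{thmain} to this probability density, and then restore the scaling factor using the elementary identity $\delta_{LS}(c g) = c\,\delta_{LS}(g)$ valid for any $g$ with $\int g\, d\gamma = 1$ and any $c > 0$. This yields the pointwise bound
$$
\delta_{LS}(f_{\bar x^i}) \geq \frac{1}{2}\, d(f_{\bar x^i}\varphi, \varphi)^2,
$$
with the squared distance on the right interpreted via the same normalization convention. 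Summing over $i$, integrating against $d\gamma_{n-1}(\bar x^i)$, and combining with the previous step gives the corollary.

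The main points to watch are, on the one hand, that $f_{\bar x^i}$ actually lies in the class $\mathcal{A}_1 \subset \mathcal{B}_1$ for a.e.\ $\bar x^i$ (so that Theorem \ref{thmain} applies and $d(f_{\bar x^i}\varphi, \varphi)$ is well defined), and on the other hand that the normalization convention used in the corollary statement for $d(f_{\bar x^i}\varphi,\varphi)^2$ is consistent with the scaling of the one-dimensional deficit. The first follows from $f \in \mathcal{A}_n$ together with Fubini; the second is the only mildly delicate bookkeeping step. No new analytic ingredient beyond Theorem \ref{thmain} itself is required.
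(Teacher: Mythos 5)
Your proof follows essentially the same route as the paper: subadditivity (tensorization) of the entropy, exact tensorization of the Dirichlet form via Fubini, and an application of the one-dimensional case of Theorem \ref{thmain} to each slice $f_{\bar{x}^i}$. The normalization point you flag (that $\int_{\mathbb{R}} f_{\bar{x}^i}\, d\gamma$ need not equal $1$, so the slice must be rescaled before invoking the theorem and the definition of $d$) is genuine and is passed over in silence in the paper's own proof, so your explicit bookkeeping is, if anything, the more careful presentation.
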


Now, by construction, if $f(x)=h(x_1)h(x_2)\dots h(x_n)$, $x=(x_1,\dots,x_n)$, with $h \colon \mathbb{R} \to \mathbb{R}_+^*$, then both  $\delta_{LS}(f)$ and the right hand side of the latter are  of (the correct) order $n$.

\begin{proof}
The proof uses the tensorisation property of the entropy. It is well known (see \textit{e.g.}\ \cite[Chapter 1]{ane}) that for any $f \colon \mathbb{R}^n \to \mathbb{R}$, it holds
$$
\ent_{\gamma_n}(f) \leq \sum_{i=1}^n \int_{\mathbb{R}^{n-1}} \ent_\gamma(f_{\bar{x}^i}) d\gamma_{n-1}(\bar{x}^i).
$$
Hence, applying Theorem \ref{thmain} $n$ times, we get (since $f_{\bar{x}^i}'(x_i)=\frac{\partial f}{\partial x_i}(x)$)
\begin{align*}
2 \ent_{\gamma_n}(f) & \leq
\sum_{i=1}^n \int_{\mathbb{R}^{n-1}} \int_\mathbb{R} \frac{{f_{\bar{x}^i}'}^2(x_i)}{f_{\bar{x}^i}(x_i)}  d\gamma(x_i) d\gamma_{n-1}(\bar{x}^i)
-
\sum_{i=1}^n \int_{\mathbb{R}^{n-1}} d(f_{\bar{x}^i} \varphi, \varphi)^2 d\gamma_{n-1}(\bar{x}^i) \\
& =
 \int_{\mathbb{R}^n} \frac{|\nabla f|^2}{f} d\gamma_n
-
\sum_{i=1}^n \int_{\mathbb{R}^{n-1}} d(f_{\bar{x}^i} \varphi, \varphi)^2 d\gamma_{n-1}(\bar{x}^i).
\end{align*}
The expected result follows.
\end{proof}

\section{Stability of the Talagrand transport-entropy Inequality}

In this section we bound the deficit in the so-called Talagrand inequality, using again the distance $d_n$ introduced by Bucur and Fragal\`a.
Recall that (see \textit{e.g.}\ \cite{villani1}) the Kantorovich-Wasserstein distance $W_2$ is defined as
$$
W_2(\nu,\mu):= \inf_{\pi} \left( \iint |x-y|^2 \pi(dx,dy) \right)^\frac{1}{2},
$$
where the infimum runs over all couplings $\pi$ on $\mathbb{R}^n \times \mathbb{R}^n$ with first marginal $\nu$ and second marginal $\mu$ (\textit{i.e.}\ $\pi(\mathbb{R}^n , dy)=\mu(dy)$ and $\pi(dx,\mathbb{R}^n)=\nu(dx)$).
Talagrand, in his seminal paper \cite{talagrand-96}, proved the following inequality: for all
probability measure $\nu$  on $\mathbb{R}^n$, absolutely continuous with respect to $\gamma_n$, it holds
\begin{equation} \label{T}
W_2^2(\nu,\gamma_n) \leq 2 H(\nu|\gamma_n),
\end{equation}
where  $H(\nu|\gamma_n):=\int_{\mathbb{R}} \log \frac{d\nu} {d\gamma_n} \,d\nu$ if $\nu <\!< \gamma_n$ (whose density is denoted by $d\nu/d\gamma_n$) and
$H(\nu|\gamma)=+\infty$ otherwise, is the relative entropy of $\nu$ with respect to $\gamma_n$.
Such an inequality, that is usually called Talagrand transport-entropy inequality, is related to Gaussian concentration in infinite dimension \cite{talagrand-96,marton,gozlan} (see the monographs \cite{gozlan-leonard,ledoux-concentration} for an introduction). It is known, since the celebrated work by Otto and Villani \cite{otto-villani}, that the log-Sobolev inequality \eqref{losob} implies the Talagrand inequality \eqref{T} (in any dimension, see \cite{bobkov-gentil-ledoux,W04,BEHM09,LV07,GRS11,GRS13,GRS14,gigli-ledoux} for alternative proofs and extensions).

The stability of \eqref{T} is also studied in \cite{indrei,fathi,bobkov,cordero15}. We may obtain that, as a direct consequence of the transport of mass approach of \eqref{T} by Cordero-Erausquin \cite{cordero} and of the tensorisation property, one can bound from below, as for the log-Sobolev inequality, the deficit in \eqref{T} by the distance $d_n$ defined by Bucur and Fragal\`a.

\begin{theorem} \label{PT}
For all probability measure $\nu$ on $\mathbb{R}^n$ with continuous and positive density
%\color{magenta}
$f\in \mathcal{B}_n $
%\color{black}
with respect to the Gaussian measure $\gamma_n$, it holds
\begin{equation} \label{stabT}
\delta_{Tal}(f):=2 H(\nu|\gamma_n) - W_2^2(\nu,\gamma_n) \geq \frac{1}{2} d_n(f \varphi_n,\varphi_n)^2 .
\end{equation}
\end{theorem}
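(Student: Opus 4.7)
The plan is to combine a one-dimensional stability statement for Talagrand's inequality with a directional layered transport argument, thereby exploiting the rotation invariance of $\gamma_n$ and the tensorisation of the relative entropy, in the spirit of the Cordero-Erausquin mass-transport proof of \eqref{T}.

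\textbf{Step 1: one-dimensional stability.} I will first prove that for any probability $\mu(dx) = f(x)\varphi(x)\,dx$ on $\mathbb{R}$ with $f \in \mathcal{B}$,
$$2H(\mu|\gamma) - W_2^2(\mu,\gamma) \geq \frac{1}{2} d(f\varphi,\varphi)^2.$$
To this end I introduce the monotone Brenier map $T \colon \mathbb{R} \to \mathbb{R}$ pushing $\gamma$ onto $\mu$. The Monge-Amp\`ere identity $\log f(T(x)) = (T(x)^2 - x^2)/2 - \log T'(x)$, combined with the Gaussian integration by parts $\int x T\,\varphi\,dx = \int T'\,\varphi\,dx$, yields
$$2H(\mu|\gamma) - W_2^2(\mu,\gamma) = 2\int_{\mathbb{R}} \bigl(T'(x) - 1 - \log T'(x)\bigr)\varphi(x)\,dx.$$
A change of variables $y = T(x)$ in \eqref{d} moreover gives $d(f\varphi,\varphi) = \int |T' - 1|/\max(T',1)\,\varphi\,dx$. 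By Cauchy-Schwarz, $d(f\varphi,\varphi)^2 \leq \int (T'-1)^2/\max(T',1)^2\,\varphi\,dx$, so the stability reduces to the elementary pointwise bound $4(s - 1 - \log s) \geq (s-1)^2 / \max(s,1)^2$ for $s > 0$, which I verify by derivative analysis on each of $(0,1]$ and $[1, \infty)$ (both sides vanish at $s=1$ and the difference is monotone on each sub-interval).

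\textbf{Step 2: directional layered transport.} Fix $\xi \in \mathbb{S}^{n-1}$. By rotation invariance of $\gamma_n$ one has $(\varphi_n)_\xi = \varphi$ and the disintegration $\gamma_n = \gamma \otimes \gamma_{n-1}$ along $\mathbb{R}^n = \mathbb{R}\xi \oplus H_\xi$. I disintegrate $\nu$ accordingly as $\nu_\xi(dy)\otimes\nu^{y}(dx')$, where $\nu_\xi$ is the $\xi$-marginal, of density $(f\varphi_n)_\xi/\varphi$ with respect to $\gamma$. Let $T_\xi$ be the one-dimensional Brenier map from $\gamma$ to $\nu_\xi$, and for each $y'$ let $S_{y'}$ be the Brenier map from $\gamma_{n-1}$ onto $\nu^{y'}$. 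The layered map $S(y,x') := (T_\xi(y), S_{T_\xi(y)}(x'))$ pushes $\gamma_n$ forward to $\nu$ and satisfies $|S(y,x') - (y,x')|^2 = (T_\xi(y) - y)^2 + |S_{T_\xi(y)}(x') - x'|^2$, hence
$$W_2^2(\nu,\gamma_n) \leq W_2^2(\nu_\xi,\gamma) + \int W_2^2(\nu^{y'}, \gamma_{n-1})\, d\nu_\xi(y').$$
Combining with the chain rule $H(\nu|\gamma_n) = H(\nu_\xi|\gamma) + \int H(\nu^{y'}|\gamma_{n-1})\, d\nu_\xi(y')$ and applying Talagrand's inequality \eqref{T} in dimension $n-1$ to each conditional $\nu^{y'}$, the conditional contributions are nonnegative and I am left with
$$\delta_{Tal}(f) \geq 2H(\nu_\xi|\gamma) - W_2^2(\nu_\xi,\gamma).$$

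\textbf{Step 3: optimisation over $\xi$.} Step 1 applied to the one-dimensional marginal $\nu_\xi$ gives $\delta_{Tal}(f) \geq \frac{1}{2} d\bigl((f\varphi_n)_\xi, (\varphi_n)_\xi\bigr)^2$. Since $\xi \in \mathbb{S}^{n-1}$ is arbitrary, taking the supremum delivers $\delta_{Tal}(f) \geq \frac{1}{2} d_n(f\varphi_n,\varphi_n)^2$, as desired. The main technical obstacle I anticipate is bookkeeping the regularity required to run the layered Brenier construction uniformly in $\xi$, so that the one-dimensional maps $T_\xi$ are absolutely continuous and the disintegrations $\nu^{y'}$ are well defined; these issues are exactly what is controlled by the hypothesis $f \in \mathcal{B}_n$ with continuous, positive density.
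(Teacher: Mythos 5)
Your proof is correct and follows essentially the same route as the paper's: a one-dimensional stability estimate obtained from the monotone transport identity $2H(\mu|\gamma)-W_2^2(\mu,\gamma)=2\int(T'-1-\log T')\,d\gamma$ combined with an elementary pointwise bound and the Cauchy--Schwarz inequality, followed by tensorisation of $W_2^2$ and of the relative entropy along a distinguished direction, with Talagrand's inequality absorbing the conditional terms. The only cosmetic differences are that you prove the one-dimensional inequality directly via Monge--Amp\`ere and Gaussian integration by parts rather than citing Cordero-Erausquin, use a two-block disintegration along $\mathbb{R}\xi\oplus H_\xi$ instead of the paper's full $n$-fold chain, and run the argument for every $\xi$ before taking the supremum, which neatly sidesteps the paper's implicit assumption that the supremum defining $d_n$ is attained.
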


%The same comments that for Theorem \eqref{thmain} apply: the result is valid only for a subclass of probability measures $\nu$,
%\color{magenta}
%but there is no assumption involving moments of $\nu$.
%\color{black}
The above result together with
theorem \eqref{thmain} somehow justify the use of the distance $d_n$. As for Theorem \ref{thmain} the bound on the deficit is one dimensional and thus not of the correct order. This fact may become clear to the reader through the proof: we use some tensorisation property but apply a bound on the deficit only to one single coordinate.

\begin{proof}
The proof goes in two steps : we first prove the lower bound of the deficit in dimension 1, then we use a tensorisation procedure.

From \cite{cordero} we can extract the following one dimensional inequality (here $f \colon \mathbb{R} \to \mathbb{R}_+^*$)
$$
\delta_{Tal}(f) \geq \int_{\mathbb{R}} [T' - 1 - \log T']\,  d\gamma ,
$$
where $T=F_\nu^{-1} \circ F_\gamma$ is the push forward of $\gamma$ onto $\nu$. Using that $s-1-\log s \geq \frac{1}{2} \left(\frac{1-s}{\max(1,s)} \right)^2$ and the Cauchy-Schwartz Inequality, we can conclude that
\begin{align*}
\delta_{Tal} (f)
\geq
\frac{1}{2} \int_{\mathbb{R}} \left(\frac{1-T'}{\max(1,T')} \right)^2 \,  d\gamma
\geq
\frac{1}{2}  \left( \int_{\mathbb{R}} \frac{|1-T'|}{\max(1,T')} \,  d\gamma  \right)^2
 =
\frac{1}{2} d( \varphi,f \varphi)^2,
\end{align*}
which ends the proof of the first step (since $d$ is symmetric).

Next, recall the tensorisation property of the Kantorovich-Wasserstein metric and of the relative entropy: for
$f \colon \mathbb{R}^n \to \mathbb{R}$,  $\nu(dx)=f(x)dx$, we have
$$
W_2^2(\nu,\gamma_n) \leq W_2^2(\nu_1,\gamma)
+ \sum_{i=1}^{n-1} \int_{\mathbb{R}^i} W_2^2(\nu_{x_1,\dots,x_i},\gamma) d\gamma_i(x_1,\dots,x_i)
$$
and
$$
H(\nu|\gamma_n) = H(\nu_1|\gamma) + \sum_{i=1}^{n-1} \int_{\mathbb{R}^i} H(\nu_{x_1,\dots,x_i}|\gamma) d\gamma_i(x_1,\dots,x_i),
$$
where we used the desintegration formula
$$
\nu(dx_1,\dots,dx_n) = \nu_1(dx_1) \nu_{x_1}(dx_2)\nu_{x_1,x_2}(dx_3) \times \cdots \times \nu_{x_1,\dots,x_{n-1}}(dx_n) .
$$
Now the supremum defining $d_n(f \varphi_n,\varphi_n)$ is reached at some $\xi \in \mathbb{S}^{n-1}$ that we may assume for simplicity and without loss of generality (since $\gamma_n$ is invariant by rotation) to be the first unit vector of the canonical basis $(1,0,\dots,0)$. Using the tensorisation formulas above, applying the result we obtained in dimension 1, and \eqref{T} $n-1$ times, we thus get
\begin{align*}
W_2^2(\nu,\gamma_n) & \leq
W_2^2(\nu_1,\gamma)
+ \sum_{i=1}^{n-1} \int_{\mathbb{R}^i} W_2^2(\nu_{x_1,\dots,x_i},\gamma) d\gamma_i(x_1,\dots,x_i) \\
& \leq
2 H(\nu_1|\gamma) - \frac{1}{2} d( f_1 \varphi, \varphi)^2 + 2 \sum_{i=1}^{n-1} \int_{\mathbb{R}^i} H(\nu_{x_1,\dots,x_i}|\gamma) d\gamma_i(x_1,\dots,x_i) \\
& =
2 H(\nu |\gamma) - \frac{1}{2} d( f_1 \varphi, \varphi)^2
 =
2 H(\nu |\gamma) - \frac{1}{2} d_n( f \varphi_n, \varphi_n)^2,
\end{align*}
where we set $f_1$ for the density of $\nu_1$ with respect to $\gamma$. By construction $\nu_1$ is the first marginal of $\nu$ so that $f_1 \varphi = (f \varphi_n)_\xi$.
This ends the proof.
\end{proof}

\section*{Acknowledgement}
The authors warmly thank Emmanuel Indrei and Michel Ledoux for useful discussions on the topic of the present paper. The hospitality of MODAL'X, Universit\'e Paris Ouest Nanterre la D\'efense, and of the IMA, Minneapolis, where part of this paper was done, are gratefully acknowledged.

%%%%%%%%%%%%%%%%%%%%%%%%%%%%%%%%%%%%%%%%%%%%%%%%
%%%%%%%%%%%%%%%%%%%%%%%%%%%%%%%%%%%%%%%%%%%%%%%%
%%%%%%%%%%%%%% Bibliography %%%%%%%%%%%%%%%%%%%%

\bibliographystyle{plain}
\bibliography{FPR-logsob}

\def\cprime{$'$}
\begin{thebibliography}{10}

\bibitem{ane}
C.~An{\'e}, S.~Blach{\`e}re, D.~Chafa{\"\i}, P.~Foug{\`e}res, I.~Gentil,
  F.~Malrieu, C.~Roberto, and G.~Scheffer.
\newblock {\em Sur les in{\'e}galit{\'e}s de {S}obolev logarithmiques},
  volume~10 of {\em Panoramas et Synth{\`e}ses}.
\newblock Soci{\'e}t{\'e} {M}ath{\'e}matique de {F}rance, Paris, 2000.

\bibitem{bakry}
D.~Bakry.
\newblock L'hypercontractivit\'e et son utilisation en th\'eorie des
  semigroupes.
\newblock In {\em Lectures on Probability theory. \'Ecole d'{\'e}t{\'e} de
  {P}robabilit{\'e}s de St-Flour 1992}, volume 1581 of {\em Lecture Notes in
  Math.}, pages 1--114. Springer, Berlin, 1994.

\bibitem{ball1}
K.~Ball and K.~B{\"o}r{\"o}czky.
\newblock Stability of the {P}r\'ekopa-{L}eindler inequality.
\newblock {\em Mathematika}, 56(2):339--356, 2010.

\bibitem{ball2}
K.~Ball and K.~B{\"o}r{\"o}czky.
\newblock Stability of some versions of the {P}r\'ekopa-{L}eindler inequality.
\newblock {\em Monatsh. Math.}, 163(1):1--14, 2011.

\bibitem{BEHM09}
Z.~Balogh, A.~Engoulatov, L.~Hunziker, and O.~E. Maasalo.
\newblock Functional inequalities and {H}amilton-{J}acobi equations in geodesic
  spaces.
\newblock Preprint. Available on the ArXiv http://arxiv.org/abs/0906.0476,
  2009.

\bibitem{barthe}
F.~Barthe.
\newblock Autour de l'in\'egalit\'e de {B}runn-{M}inkowski.
\newblock {\em Ann. Fac. Sci. Toulouse Math. (6)}, 12(2):127--178, 2003.

\bibitem{bobkov}
S.~Bobkov.
\newblock Extremal properties of half-spaces for log-concave distributions.
\newblock {\em Ann. Probab.}, 24(1):35--48, 1996.

\bibitem{bobkov-gentil-ledoux}
S.~G. Bobkov, I.~Gentil, and M.~Ledoux.
\newblock Hypercontractivity of {H}amilton-{J}acobi equations.
\newblock {\em J. Math. Pures Appl. (9)}, 80(7):669--696, 2001.

\bibitem{bobkov-ledoux00}
S.~G. Bobkov and M.~Ledoux.
\newblock From {B}runn-{M}inkowski to {B}rascamp-{L}ieb and to logarithmic
  {S}obolev inequalities.
\newblock {\em Geom. Funct. Anal.}, 10(5):1028--1052, 2000.

\bibitem{bogachev}
V.~I. Bogachev.
\newblock {\em Gaussian measures}, volume~62 of {\em Mathematical Surveys and
  Monographs}.
\newblock American Mathematical Society, Providence, RI, 1998.

\bibitem{bucur}
D.~Bucur and I.~Fragal{\`a}.
\newblock Lower bounds for the {P}r\'ekopa-{L}eindler deficit by some distances
  modulo translations.
\newblock {\em J. Convex Anal.}, 21(1):289--305, 2014.

\bibitem{carlen}
E.~A. Carlen.
\newblock Superadditivity of {F}isher's information and logarithmic {S}obolev
  inequalities.
\newblock {\em J. Funct. Anal.}, 101(1):194--211, 1991.

\bibitem{cordero}
D.~Cordero-Erausquin.
\newblock Some applications of mass transport to {G}aussian-type inequalities.
\newblock {\em Arch. Ration. Mech. Anal.}, 161(3):257--269, 2002.

\bibitem{cordero15}
D.~Cordero-Erausquin.
\newblock Transport inequalities for log-concave measures, quantitative forms
  and applications.
\newblock Preprint, 2015.

\bibitem{dubuc}
S.~Dubuc.
\newblock Crit\`eres de convexit\'e et in\'egalit\'es int\'egrales.
\newblock {\em Ann. Inst. Fourier (Grenoble)}, 27(1):x, 135--165, 1977.

\bibitem{fathi}
M.~Fathi, E~Indrei, and M.~Ledoux.
\newblock Quantitative logarithmic sobolev inequalities and stability
  estimates.
\newblock Preprint available arXiv:1410.6922, 2014.

\bibitem{gardner}
R.~J. Gardner.
\newblock The {B}runn-{M}inkowski inequality.
\newblock {\em Bull. Amer. Math. Soc. (N.S.)}, 39(3):355--405, 2002.

\bibitem{gigli-ledoux}
N.~Gigli and M.~Ledoux.
\newblock From log {S}obolev to {T}alagrand: a quick proof.
\newblock {\em Discrete Contin. Dyn. Syst.}, 33(5):1927--1935, 2013.

\bibitem{gozlan}
N.~Gozlan.
\newblock Characterization of {T}alagrand's like transportation-cost
  inequalities on the real line.
\newblock {\em J. Funct. Anal.}, 250(2):400--425, 2007.

\bibitem{gozlan-leonard}
N.~Gozlan and C.~L{\'e}onard.
\newblock Transport inequalities. {A} survey.
\newblock {\em Markov Process. Related Fields}, 16(4):635--736, 2010.

\bibitem{GRS11}
N.~Gozlan, C.~Roberto, and P.-M. Samson.
\newblock A new characterization of {T}alagrand's transport-entropy
  inequalities and applications.
\newblock {\em Ann. Probab.}, 39(3):857--880, 2011.

\bibitem{GRS13}
N.~Gozlan, C.~Roberto, and P.-M. Samson.
\newblock Characterization of {T}alagrand's transport-entropy inequalities in
  metric spaces.
\newblock {\em Ann. Probab.}, 41(5):3112--3139, 2013.

\bibitem{GRS14}
N.~Gozlan, C.~Roberto, and P.-M. Samson.
\newblock Hamilton {J}acobi equations on metric spaces and transport entropy
  inequalities.
\newblock {\em Rev. Mat. Iberoam.}, 30(1):133--163, 2014.

\bibitem{gross-75}
L.~Gross.
\newblock Logarithmic {S}obolev inequalities.
\newblock {\em Amer. J. Math.}, 97:1061--1083, 1975.

\bibitem{gross-93}
L.~Gross.
\newblock Logarithmic {S}obolev inequalities and contractivity properties of
  semi-groups. in {D}irichlet forms. {D}ell'{A}ntonio and {M}osco eds.
\newblock {\em Lect. Notes Math.}, 1563:54--88, 1993.

\bibitem{indrei}
E.~Indrei and D.~Marcon.
\newblock A quantitative log-{S}obolev inequality for a two parameter family of
  functions.
\newblock {\em Int. Math. Res. Not. IMRN}, (20):5563--5580, 2014.

\bibitem{ledoux-99}
M.~Ledoux.
\newblock Concentration of measure and logarithmic {S}obolev inequalities.
\newblock In {\em S\'eminaire de {P}robabilit\'es, {XXXIII}}, volume 1709 of
  {\em Lecture Notes in Math.}, pages 120--216. Springer, Berlin, 1999.

\bibitem{ledoux-concentration}
M.~Ledoux.
\newblock {\em The concentration of measure phenomenon}, volume~89 of {\em
  Math. Surveys and Monographs}.
\newblock American {M}athematical {S}ociety, Providence, {R}.{I}, 2001.

\bibitem{leindler}
L.~Leindler.
\newblock On a certain converse of {H}\"older's inequality.
\newblock In {\em Linear operators and approximation ({P}roc. {C}onf.,
  {O}berwolfach, 1971)}, pages 182--184. Internat. Ser. Numer. Math., Vol. 20.
  Birkh\"auser, Basel, 1972.

\bibitem{LV07}
J.~Lott and C.~Villani.
\newblock Hamilton-{J}acobi semigroup on length spaces and applications.
\newblock {\em J. Math. Pures Appl. (9)}, 88(3):219--229, 2007.

\bibitem{martinelli}
F.~Martinelli.
\newblock Lectures on {G}lauber dynamics for discrete spin models.
\newblock In {\em Lectures on probability theory and statistics. \'Ecole
  d'été de probabilités de St-Flour 1997}, volume 1717 of {\em Lecture Notes
  in Math.}, pages 93--191. Springer, Berlin, 1999.

\bibitem{marton}
K.~Marton.
\newblock Bounding {$\overline d$}-distance by informational divergence: a
  method to prove measure concentration.
\newblock {\em Ann. Probab.}, 24(2):857--866, 1996.

\bibitem{nelson1}
E.~Nelson.
\newblock A quartic interaction in two dimensions.
\newblock In {\em Mathematical Theory of Elementary Particles (Proc. Conf.,
  Dedham, Mass., 1965)}, pages 69--73. M.I.T. Press, Cambridge, Mass., 1966.

\bibitem{otto-villani}
F.~Otto and C.~Villani.
\newblock Generalization of an inequality by {T}alagrand and links with the
  logarithmic {S}obolev inequality.
\newblock {\em J. Funct. Anal.}, 173(2):361--400, 2000.

\bibitem{prekopa}
A.~Pr{\'e}kopa.
\newblock On logarithmic concave measures and functions.
\newblock {\em Acta Sci. Math. (Szeged)}, 34:335--343, 1973.

\bibitem{stam}
A.~J. Stam.
\newblock Some inequalities satisfied by the quantities of information of
  {F}isher and {S}hannon.
\newblock {\em Information and Control}, 2:101--112, 1959.

\bibitem{talagrand-96}
M.~Talagrand.
\newblock Transportation cost for {G}aussian and other product measures.
\newblock {\em Geom. Funct. Anal.}, 6(3):587--600, 1996.

\bibitem{villani1}
C.~Villani.
\newblock {\em Topics in optimal transportation}, volume~58 of {\em Graduate
  Studies in Mathematics}.
\newblock American Mathematical Society, Providence, RI, 2003.

\bibitem{villani}
C.~Villani.
\newblock {\em Optimal transport}, volume 338 of {\em Grundlehren der
  Mathematischen Wissenschaften [Fundamental Principles of Mathematical
  Sciences]}.
\newblock Springer-Verlag, Berlin, 2009.
\newblock Old and new.

\bibitem{W04}
F.~Y. Wang.
\newblock Probability distance inequalities on {R}iemannian manifolds and path
  spaces.
\newblock {\em J. Funct. Anal.}, 206(1):167--190, 2004.

\end{thebibliography}

\end{document}